\newtheorem{lemma}{Lemma}
\theoremstyle{definition}
\newtheorem{definition}[lemma]{Definition}
\theoremstyle{remark}
\newtheorem{remark}[lemma]{Remark}
\numberwithin{equation}{section}
\newfont{\cyr}{wncyr8}
\newfont{\cyi}{wncyi8}
\newcommand{\cD}{{\mathcal D}}
\newcommand{\cE}{{\mathcal E}}
\newcommand{\cF}{{\mathcal F}}
\newcommand{\cG}{{\mathcal G}}
\newcommand{\cI}{{\mathcal I}}
\newcommand{\cL}{{\mathcal L}}
\newcommand{\cR}{{\mathcal R}}
\newcommand{\gD}{\Delta}
\newcommand{\gf}{\varphi}
\newcommand{\n}{^{-1}}
\begin{document}

\title{Transitive representations of inverse semigroups}
\author{Boris M. Schein}
\address{Department of Mathematical Sciences\newline\indent University of Arkansas\newline\indent
Fayetteville, AR 72701, USA}
\email{bschein@uark.edu}

%    \subjclass is required.
\subjclass[2010]{Primary 20M18, 20M30}

%    "Communicated by" -- provide editor's name; required.
\commby{Victoria Gould}

\begin{abstract}
While every group is isomorphic to a transitive group of permutations, the analogous property fails for inverse semigroups: not all inverse semigroups are isomorphic to transitive inverse semigroups of one-to-one partial transformations of a set. We describe those inverse semigroups that are.
\end{abstract}

\maketitle
 
 \section{Introduction and Preliminaries}

This paper consists of three sections. Section 1 explains Wagner's Problem raised in 1952 and solved here. Section 2 explains non-algebraic heuristic ideas that led to the solution. Section 3 contains the main result and all necessary proofs. 

We begin with definitions that make this paper reasonably self-contained. Different categories of readers may be more or less familiar with some of them. 

A {\em semigroup} is a nonempty set with associative multiplication. If $S$ is a semigroup and $sts = s,\; tst = t$ for $s,t \in S$, then $t$ is called an {\em inverse} for $s$. A semigroup is {\em regular} if each of its elements  has an inverse. A semigroup is {\em inverse} if each of its elements has a  {\em uniquely determined} inverse. If $S$ is inverse and $s \in S$, then $s\n$ denotes the unique inverse of $s$. Alternatively, inverse semigroups are precisely regular semigroups with commuting idempotent elements. A nonempty subset $T$ of an inverse semigroup $S$ is called an {\em inverse subsemigroup} of $S$ if $T$ is closed under multiplication and inversion, that is, $(\forall s,t)[ s,t\in T\Rightarrow st\in T]$ and $(\forall s)[ s\in T\Rightarrow s\n\in T]$. 

A {\em partial transformation} of a set $A$ is a mapping $\gf$ of a subset $B$  of $A$ into $A$. Thus $\gf(b)$ exists for all $b\in B$ and is not defined for $b\not\in B$. We will write $b\gf$ rather than $\gf(b)$. We say that $B$ is the {\em first projection} (or the {\em domain}) of $\gf$ and write $B={pr}_1\gf$. We say that $\gf$ is {\em one-to-one} if $a\gf= b\gf \Rightarrow a = b$ for all $a,b \in B$. Let $\cI_A$ denote the set of all one-to-one partial transformations of $A$. It is clear (and well known) that $\cI_A$ is closed under usual composition of partial transformations: $\gf,\psi\in \cI_A \Rightarrow \gf\circ\psi\in\cI_A$.  Usually we omit 
$\circ$ and write merely $\gf\psi$. Then $a(\gf\psi) =(a\gf) \psi$ for every $a\in A$ such that both $a\gf$ and 
$(a\gf)\psi$ are defined, that is, $a\in{pr}_1\gf$ and $a\gf= \gf(a)\in{pr}_1\psi$. In particular, $\gf\psi$ may be the empty partial transformation $\varnothing$. This empty partial transformation is obviously one-to-one and hence belongs to $\cI_A$.\footnote{We denote the empty partial transformation by the same symbol as the empty set because, for us, every partial transformation 
$\gf$ {\em is} a set. Namely, $\gf=\{(a,b)\in A\times A\mid a\gf=b\}$. From this point of view the empty partial transformation {\em is} the empty set.} Thus $\cI_A$ is a semigroup of partial transformations. Also, $\gf\in\cI_A \Rightarrow \gf\n\in\cI_A$. Here $\gf$ is the {\em inverse} transformation for $\gf$, that is, 
$(\forall a,b\in A)[a\gf\n=b\Leftrightarrow b\gf =a]$. Clearly, $\gf\n$ is the inverse for $\gf$ in the sense of the theory of inverse semigroups because $\gf\gf\n\gf= \gf$ and $\gf\n\gf\gf\n= \gf\n$ for every $\gf\in\cI_A$. We call $\cI_A$ the {\em symmetric inverse semigroup on $A$}. Inverse subsemigroups of $\cI_A$ are called {\em inverse semigroups of one-to-one partial transformations} of $A$. 

\begin{definition}\label{noble}
An inverse semigroup $\Phi$ of one-to-one partial transformations of a set $A$ is called {\em transitive} if, for every $a,b\in A$, there exists $\gf\in\Phi$ such that $a\gf =b$. An inverse semigroup is called {\em noble}\footnote{What is so noble in noble inverse semigroups? We need a term for them, and the word ``noble" has already been used in \cite{Sch6}.} if it is isomorphic to a transitive inverse semigroup of one-to-one partial transformations of a set. Observe that a trivial group and a two-element semilattice are both noble because they are isomorphic to transitive inverse subsemigroups of $\cI(A)$, where $A$ is a 
singleton $\{a\}$. We exclude a single-element group and a two-element semilattice from our further considerations.
\end{definition}

In other words, $\Phi$ is transitive when $\bigcup\Phi=A\times A$, where $\bigcup\Phi$ denotes the set-theoretical union of all elements of $\Phi$ (recall that each element of $\Phi$ is a special subset of $A\times A$). 

Observe that {\em permutations} of $A$ (that is, one-to-one mappings of $A$ onto itself) form an inverse subsemigroup $\cG_A$ of $\cI_A$. Clearly, $\cG_A$ is the symmetric group of permutations of $A$. 

\begin{remark}
By Cayley's Representation Theorem every group is isomorphic to a group of permutations, that is, to a subgroup of $\cG_A$ for a suitable set $A$. Moreover, every group is isomorphic to a transitive group of permutations. Thus every group is a noble inverse semigroup.
  
Groups are precisely the inverse semigroups with a single idempotent. We may conjecture  that a noble inverse semigroup must have few idempotents (to be ``closer" to a group.) Consideration of semilattices (a semilattice is an inverse semigroup where each element is idempotent) supports this conjecture. A  semilattice is noble only if it has no more than two elements. This is so because an idempotent one-to-one partial transformation of any set $A$ has the form 
$\gD_B= \{(b,b)\in A\times A\mid b\in B\}$, the identity mapping of a subset $B\subseteq A$ onto itself. Each ``point" $b\in B$ in the domain of $\gD_B$ is a fixed point, and thus our semilattice cannot move a point into a different point, that is, if it is transitive, then the cardinality of $A$ is at most 1. If $A=\varnothing$, then 
$\cI_\varnothing$ is a single-element semilattice, while if $A=\{0\}$, a singleton, then 
$\cI_{\{0\}} = \{\varnothing, \gD_{\{0\}}\}$, a two-element semilattice.  Yet the conjecture fails because the symmetric inverse semigroup $\cI(A)$ is transitive for any set $A$ and has plenty of idempotents. 

Although not every inverse semigroup is noble, nobility is quite common. By a theorem of Wagner\footnote{A more common transliteration of this name in English is ``Vagner". His father was German and they always spelled their name as usual: Wagner.  For his papers in Russian, some people in the West transliterated his name as ``Vagner" when the papers belonged to algebra, apparently unaware of his numerous earlier papers  (published by ``Wagner") in differential geometry, calculus of variations, nonholonomic mechanics and other fields. Thus Wagner was split into two persons.  It is like transliterating the name ``Poincar\'e" written in Russian as ``Puankare" in Roman alphabet.} 
(1952) and Preston (1954), every inverse semigroup $S$ is isomorphic to an inverse subsemigroup of 
$\cI_A$ for a suitable set $A$. But $\cI(A)$ is obviously noble, so every inverse semigroup can be embedded in a noble inverse semigroup. Moreover, it follows from the theory of representations of inverse semigroups by one-to-one partial transformations as constructed in \cite{Sch1} that every inverse semigroup is isomorphic to a subdirect product of noble inverse semigroups. 

In 1952 Wagner was the first to raise this problem: which inverse semigroups are isomorphic to transitive inverse semigroups of partial one-to-one transformations. We call it here {\em Wagner's Problem}. Our goal is to present a solution to this problem---to the 60-ieth birthday of inverse semigroups. Some of the ideas of this paper were announced by the author earlier (see \cite{Sch7}).

Wagner's problem attracted attention of many researchers. We give just a couple of references to previous research. Even before inverse semigroups appeared, a very special example of a (finite inverse) semigroup was shown to be ``noble" by R.R.~Stoll in his doctoral dissertation (see \cite{Stoll}). Also I.S.~Ponizovski\u\i{ }looked at Wagner's problem in many publications (see especially \cite{Poni}, its results were considerably strengthened in \cite{Sch4}). See also \cite{Sch3} etc.  
\end{remark}

\section{Infinitesimal Filters and Order of Their Magnitude}

Here we describe some heuristic ideas that permit solving Wagner's Problem. Readers interested in ``purely algebraic" solution stated and proved in the next section may skip Section 2 except the definitions 
in it.  Also, Section 2 contains a few technical lemmas. The readers not interested in the proof of the main result in the next section may skip these lemmas. 

The following motivation may be useful because the sources of inverse semigroup theory do not belong to algebra, they appeared in the torrent of new approaches of Niels Henrik Abel, Felix Klein, Sophus Lie, \'Elie Cartan, many other researchers and led to deep changes in our perceptions of function theory, geometry---in particular differential geometry, differentiable manifolds and differential topology---among other fields.  While ``everyone" knows that, sometimes people working in ``abstract" algebra had vague ideas of original motivations that generated ``their" classes of abstract algebraic systems, while people in ``non-algebraic" fields might know something about pseudogroups of diffeomorphisms not realizing that these ideas led to much simpler and more elegant algebraic systems (inverse semigroups). 

First we need a few standard definitions from inverse semigroup theory. 

On every semigroup one can introduce the so-called Greene's equivalence relations. For example, if  $s,t\in S$ for a semigroup $S$, we say that $s$ divides $t$ on the left if $s = t$ or $t = sr$ for some $r\in S$. Symbolically, we may write $s|_l\,t$. Clearly, $|_l$ is a quasi order relation on $S$ (that is, it is reflexive and transitive). It is not necessarily symmetric but we may define its symmetric part $\cL$ as follows: $(s,t)\in \cL$ precisely if $s|_l\,t$ and $t|_l\,s$.  Analogously, we can define the right analogues $|_r$ and $\cR$ of these relations. Both $\cL$ and $\cR$ are equivalence relations on $S$. 

Also $|_l$ and $|_r$ commute when multiplied as binary relations on $S$. In other words, 
$(|_r)\circ(|_l)= (|_l)\circ(|_r)$, that is,  
$(\exists u\in S^1)[s |_l u \wedge u |_r u]\Leftrightarrow(\exists v\in S^1)[s |_r v \wedge v |_l t]$.  Here $\wedge$ is the conjunction symbol, that is, $\wedge$ means ``and". Analogously, $\cL$ and $\cR$ commute, $\cL\circ\cR= \cR\circ\cL$, and hence $\cD=\cL\circ\cR$ is an equivalence relation on $S$. If  $S$ is an inverse semigroup, these definitions can be greatly simplified. For example,
$(s,t)\in\cD \Leftrightarrow(\exists u\in S)[ss\n = uu\n \wedge t\n t= u\n u]$. 

\begin{definition}\label{order}
For elements $s$ and $t$ of an inverse semigroup $S$ define $s\le t$ if and only if $s=ss\n t$.  It is known that $\le$ is a (partial) order relation on $S$. It is called the {\em natural} (or the {\em canonical}) order relation and can be defined in many equivalent ways. For example, $s\le t\Leftrightarrow s = ss\n t \Leftrightarrow s = st\n s \Leftrightarrow s = ts\n s \Leftrightarrow ss\n = st\n \Leftrightarrow ss\n=ts\n \Leftrightarrow s\n s = s\n t \Leftrightarrow s\n s= t\n s$. 

Other equivalent definitions can be given if one introduces a derived ternary operation in $S$. For any $s,t,u\in S$ define $[stu]= st\n u$ and call $[stu]$ the {\em triple product} of $s, t$ and $u$. Then $s\le t\Leftrightarrow s=[sst] \Leftrightarrow s=[sts] \Leftrightarrow s=[tss]$. 

The natural order relation is {\em stable} with respect to both multiplication and inversion in $S$, that is, $s_1\le t_1\wedge s_2\le t_2\Rightarrow s_1s_2\le t_1t_2$ and $s\le t\Rightarrow s\n\le t\n$ for all $s, s_1,s_2,t,t_1,t_2\in S$. 

A subset $H\subseteq S$ is called {\em closed} if $(\forall s,t)[ s\in H\wedge s\le t\Rightarrow t\in H]$. For every subset $T\subseteq S$ its {\em closure}  is the set $\overrightarrow{T} = \{t: (\exists s\in T)[s\le t]\}$. It is the smallest closed subset of $S$ that contains $T$. For $s\in S$ we write $\vec s$ rather than $\overrightarrow{\{s\}}$. For every $s\in S$ the closed set $\vec s$ is closed under the triple multiplication, that is, $t,u,v\in \vec s \Rightarrow [tuv]\in\vec s$. 
\end{definition}

If $\Phi$ is an inverse semigroup of one-to-one partial transformations of a set $A$ then the natural order relation on $\Phi$  is very transparent: $\gf\le\psi$ merely means $\gf\subseteq\psi$ for any $\gf,\psi\in\Phi$. Recall that we consider $\gf$ and $\psi$ as subsets of $A\times A$. Thus $\gf\subseteq\psi$ means that $\psi$ is an extension of $\gf$ to a greater domain (equivalently, $\gf$ is a restriction of $\psi$ to a smaller domain). In other words, $\gf\le\psi$ means ${pr}_1\gf\subseteq{pr}_1\psi$ and 
$a\psi =a\gf$ for all $a\in{pr}_1\gf$. We define the {\em second projection} (or the {\em range}) of $\gf$ as the subset ${pr}_2\gf \subseteq A$ of all $b\in A$ such that $(a,b)\in \gf$ for some $a\in A$. Divisibility relations are interpreted simply. For example, for 
$\gf,\psi\in\Phi$,
\begin{align*}
\gf|_l\,\psi  &\Leftrightarrow pr_1\gf \supseteq pr_1\psi,  &\gf\cL\psi \Leftrightarrow pr_1\gf &= pr_1\psi, \\ 
\gf|_r\,\psi  &\Leftrightarrow pr_2\gf \supseteq pr_2\psi,  &\gf\cR\psi \Leftrightarrow pr_2\gf &= pr_2\psi.
\end{align*} 

Each element $\gf$ of a symmetric inverse semigroup $\cI(A)$ can be viewed as the set-theoretical union of all one-to-one partial transformations $\{(a,b)\}$ of a set $A$ such that $(a,b)\in\gf$. Thus $\gf$ is the least upper bound (under the set-theoretical inclusion order) of all these $\{(a,b)\}$. 

Unfortunately, partial transformations $\{(a,b)\}$ do not have to be the elements of an arbitrary transitive inverse subsemigroup $\Phi$ of $\cI(A)$. Also, $\{(a,b)\}$ may possess abstract algebraic properties not shared by any elements of $\Phi$. For example, they are ``atoms" in the sense that every smaller partial transformation is empty, that is, the zero of  $\cI$. The product of transformations $\{(a_1,b_1)\}$  and 
$\{(a_2,b_2)\}$ is $\varnothing$ except when $b_1= a_2$. Even if $b_1= a_2$, the product of our transformations in the opposite order vanishes unless $b_2= a_1$, and hence both products are not 
$\varnothing$ only if our transformations are inverses each of the other.  If an inverse semigroup possesses something like these atoms, their existence may be exploited to express the fact that our inverse semigroup is noble (this line of thought was explored in \cite{Sch3}, \cite{Sch5} and \cite{Sch6}). Yet an inverse semigroup $\Phi$ may be transitive without possessing anything even remotely similar to ``atoms". 

We try another approach. In some sense $a\gf = b$ tells us something about the ``local" behavior of the function $\gf$. If $A$ is a topological space (or a differentiable manifold) we may be interested in the local behavior of $\gf$ in a neighborhood of the point $a$. We may say that functions $\gf$ and $\psi$ defined in certain neighborhoods of $a$ are equivalent with respect to $a$ if there exists a neighborhood $U$ of $a$ such that $U\subseteq pr_1\gf,\, U\subseteq pr_1\psi$ and the restrictions $\gf_{|U}$ and $\psi_{|U}$ of $\gf$ and $\psi$ to $U$ coincide. The equivalence class of $\gf$ with respect to $a$ is the set of all functions defined in neighborhoods of $a$ that are equivalent to $\gf$. Intuitively, this concept expresses the behavior of $\gf$ on an ``infinitesimally small" neighborhood of $a$. This idea leads us to the germs of the functions at a point and to the so-called ``pointwise localization."   

   Let $\Phi$ be a semigroup or an inverse semigroup of partial transformations of a topological space $A$ that may well be a differentiable manifold. Assume that all functions from $\Phi$ are defined on open subspaces or submanifolds of $A$, all of them are diffeomorphisms of a certain differentiability class, and all of them are ``locally one-to-one" (which means that for every $a\in A$ and any $\gf\in\Phi$ there is a neighborhood $U$ of $a$ such that $\gf_{|U}$ is one-to-one. If $\Phi$ is an inverse semigroup of partial one-to-one transformations, then $\Phi$ satisfies this requirement. For simplicity assume that we consider such inverse semigroups only.) 
   
   Suppose that $\gf$ and $\psi$ in $\Phi$ coincide on a sufficiently small neighborhood $U$ of point 
   $a\in A$. The equality  $\gf_{|U}= \psi_{|U}$ implies that 
   $\gf_{|U}\circ (\psi_{|U})\n= \gD_U$. Observe that 
$\psi_{|U}\subseteq\psi$, and hence $\gD_U= \gf_{|U}\circ (\psi_{|U})\n \subseteq \gf_{|U}\circ \psi$ and 
$U= pr_1\gD_U= \gf_{|U}\circ \psi$. We obtain $\gf_{|U}\circ \psi= \gD_U$. If $\chi$ in $\Phi$ is equivalent to 
$\gf$ and $\psi$, then all three of them coincide on a sufficiently small neighborhood of $a$. We can assume that $U$ is this neighborhood because if $U$ is ``too large", we can always make it smaller. Then  $[\gf\psi\chi]_{|U}= (\gf\circ \psi\n\circ \chi)_{|U}= \gf_{|U}\circ \psi\n \circ \chi= \gD_U\circ \chi= \chi_{|U}$. Thus  $[\gf\psi\chi]$ and $\chi$ are equivalent about $a$ (that is, with respect to $a$). It follows that the equivalence class of all $\chi\in\Phi$ that are equivalent to $\gf$ about $a$ is closed under the triple product! Also, this class is closed with respect to the natural order: if $a\in pr_1 \gf\in \Phi$ and $\gf\subseteq \psi$ for $\psi\in\Phi$ then $\gf$ and 
$\psi$ coincide on a sufficiently small neighborhood of $a$. It is not difficult to see that in this situation the ``germs" of the functions $\gf\in\Phi$ at $a\in A$ are subsets of $\Phi$ that are closed under both the triple multiplication and the natural order relation. 

In the case when $\Phi$ is a semilattice the concept of germs simplifies. In such a case a germ is a subset of $\Phi$ closed both under the natural order and the ordinary (rather than the triple) multiplication (see Lemma \ref{filt3}(3) below). In this case a germ usually appears under a different name, that of a {\em filter}. Interestingly, the ``ideology" of filters originally appeared in a not totally dissimilar situation when Henri Cartan and Andr\'e Weil wanted to free topology from the constraints of countability and write a ``modern" course of analysis. Cartan invented filters but called them merely ``boum!" (French  for ``bang!"). In a  formal publication \cite{Car} they used a more ``civilized" name. 

\smallskip
What does it mean that $\Phi$ is transitive? From the ``germ" point of view this means that for all $a,b\in A$ there exists a sufficiently small neighborhood $U$ of $a$ and a function $\gf\in\Phi$ such that 
$a\gf_{|U}= b$. Is not this definition of transitivity more complicated than our previous definition where we used merely $a\gf=b$? Yes because our new equality with $\gf_{|U}$ looks more complicated. No because the existence of $\gf$ moving $a$ to $b$ was not an ``abstract" algebraic property of $\Phi$ while the same equality in the ``germ" form can be defined in the language of ``abstract" algebra. A property is ``abstract" when it is invariant under isomorphisms. For example, two inverse semigroups $\Phi$ and $\Psi$ may be isomorphic as abstract semigroups while one of them may be transitive and the other one not transitive. However, germs may be defined for abstract inverse semigroups, and there is a hope that in this language noble inverse semigroups may possess special abstract properties not shared by all inverse semigroups. We shall see that this is indeed the case.

Thus if $\Phi$ is transitive it has a ``transitive set" of germs, that is, $\Phi$ has a subset $\cF$ of germs such that for all $a,b\in A$ there is a germ around $a$ that moves $a$ to $b$. Every function $\gf\in\Phi$ defined in a neighborhood of $a$ belongs to a germ that moves $a$ to some fixed point of $A$.  Thus $\Phi$ must possess a transitive set of ``very small" functions, the functions with a small domain. 

Now we need to use another idea that appeared in other parts of mathematics. What is ``small"? If $x < y$ then $x$ is smaller than $y$. But what is ``small"? That depends. It is much easier to define when two objects have ``the same size" or ``the same magnitude" rather than when one of them is smaller than the other. 

Two sets, $X$ and $Y$, are of the same magnitude when there is a bijection $\gf$ of $X$ onto $Y$. So, from the point of view of inverse semigroups, let $\Psi$ be the inverse semigroups of all possible bijections between all possible sets. We can define the same cardinality, the same ``magnitude" using the elements of $\Psi$. Of course, this $\Psi$ never exists (if it does, then we immediately hit hard set-theory paradoxes). But we are on a ``naive" level. Can't we dream? We can---but there is another difficulty. Earlier we observed that not every transitive inverse semigroup $\Phi$ has the ``atoms" $\{(a,b)\}$ that $\cI(A)$ has. And we are equally right saying that not every transitive inverse semigroup is our inverse ``mega-semigroup" $\Psi$. We are given $\Phi$, it is our ``world" and we have to decide what has the same magnitude with respect to $\Phi$. In other words, subsets $B$ and $C$ of the set $A$ have the same magnitude with respect to $\Phi$ if there is 
$\gf\in\Phi$ that is a bijection of $B$ onto $C$. If $\gf\in\Phi$ maps $B$ onto $C$ then $\gf\gf\n= \gD_B$, $\gf\n\gf= \gD_C$ and so $\gf\gf\n$ and $\gf\n\gf$ have the same magnitude ``from the point of view" of $\Phi$. 
Recall our discussion of the $\cD$-relation in the beginning of this section. We can say that $\gf$ and $\psi$ have the same magnitude in $\Phi$ when these elements of $\Phi$ are $\cD$-related. 

   And yet we are far from salvation. This is because, in the context of germs, we have to compare not 
$\gf$ and $\psi$ themselves but the germs to which they belong. The germ doesn't have to be a function. It is a function defined on an ``infinitesimally small" neighborhood of $a\in A$ and $\Phi$ may contain no such functions. Suppose $G_1$ is a germ of $\Phi$ that moves $a_1$ to $b_1$ and $G_2$ is a germ that moves $a_2$ to $b_2$. We can say that $G_1$ and $G_2$ have the same magnitude with respect to $\Phi$ if there exist $u,v\in\Phi$ such that $u$ is defined in a neighborhood of $a_2$ and $a_2u=a_1$. Likewise, 
$v$ is defined in a neighborhood of $b_2$ and it moves $b_2$ to $b_1$ and also 
$uG_1v\n\subseteq G_2$ and $u\n G_2 v\subseteq G_1$. 

Now we are ready to introduce more definitions. Following Ockham's Razor principle (``Don't multiply the entities without necessity") we use the term ``filter" rather than ``germ"). 

\begin{definition}\label{filt2} Let $S$ be an inverse semigroup.  A nonempty and proper subset $F\subset S$ is called a {\em filter} if $F$ is closed and also closed under the triple multiplication. For each nonzero 
$s\in S,\, \vec s$ is a filter called the {\em principal} filter generated by $s$.

Filters $F_1$ and $F_2$ of $S$ are {\em of the same magnitude with respect to} $S$ (denoted as $F_1\sim  F_2$) if there exist $u,v\in S$ such that $uF_1v\n\subseteq F_2$ and $u\n F_2v\subseteq F_1$.

Inverse subsemigroups $H$ and $K$ of $S$ are called {\em conjugate} if  $H$ and $K$ are filters of $S$ and $H\sim K$. 
\end{definition}

Observe that if $s$ is the zero of $S$ then $\vec s= S$ cannot be a filter because it is an improper subset of $S$.

Lemma \ref{filt1} describes certain properties of the natural order relation in inverse semigroups. If $S$ is an inverse semigroup and $H$ and $K$ are subsets of $S$ then $HK=\{s\in S|\, (\exists h\in H, k\in K)[s=hk]\}$ and $H\n= \{s\in S|\, (\exists h\in H)[s=h\n]\}$. 

\begin{lemma}\label{filt1}  \begin{enumerate}
\item For any subsets $H$ and $K$ of an inverse semigroup $S$,\\ 
$\overrightarrow{HK}= \overrightarrow{\overrightarrow{H}K}= \overrightarrow{H\overrightarrow{K}}$.
\noindent \item  For all $s,t\in S$, $s\le t$ if and only if $\vec s\supseteq \vec t$. In particular, 
$s\mapsto\vec s$ is a natural one-to-one correspondence between non-zero elements of $S$ and principal filters. ccccccccc
\end{enumerate}
\end{lemma}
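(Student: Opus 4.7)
For part (1), my plan is to prove $\overrightarrow{HK} = \overrightarrow{\overrightarrow{H}K}$ by double inclusion; the equality $\overrightarrow{HK} = \overrightarrow{H\overrightarrow{K}}$ will then follow by an entirely symmetric argument. The inclusion $\overrightarrow{HK} \subseteq \overrightarrow{\overrightarrow{H}K}$ is immediate from $H \subseteq \overrightarrow{H}$. For the reverse inclusion, I take $x \in \overrightarrow{\overrightarrow{H}K}$ and some $y \le x$ of the form $y = hk$ with $h \in \overrightarrow{H}$ and $k \in K$; choosing $h_0 \in H$ with $h_0 \le h$ and invoking stability of $\le$ under multiplication yields $h_0 k \le hk = y$, and transitivity then gives $h_0 k \le x$, placing $x$ in $\overrightarrow{HK}$ since $h_0 k \in HK$.

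For part (2), both directions follow formally from the order properties. If $s \le t$ and $u \in \vec{t}$, i.e.\ $t \le u$, then $s \le u$ by transitivity, so $u \in \vec{s}$, whence $\vec{t} \subseteq \vec{s}$. Conversely, $t \in \vec{t}$ by reflexivity, so $\vec{t} \subseteq \vec{s}$ forces $t \in \vec{s}$, that is, $s \le t$. Injectivity of $s \mapsto \vec{s}$ is then just antisymmetry of $\le$. What still has to be checked is that $\vec{s}$ is genuinely a filter precisely when $s$ is non-zero: it is nonempty (contains $s$), closed by construction, and closed under the triple product by the observation already recorded in Definition \ref{order}, so ``filter'' reduces to ``proper subset''. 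I plan to finish by showing $\vec{s} = S$ iff $s$ is a zero of $S$. Specialising $u = ss^{-1}$ in the defining equality $s = ss^{-1}u$ yields $s = ss^{-1}$, so $s$ is idempotent and equals its own inverse; then $s = ss^{-1}u = su$ and, symmetrically from the form $s = us^{-1}s$ of $s \le u$, $s = us$ for every $u$, making $s$ absorbing. The converse is the routine remark that a zero lies below every element.

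The only step with real content is the last one: recognising that an element which is minimum in the natural order of an inverse semigroup must actually be absorbing. The remainder is a straightforward manipulation with the stability, transitivity, reflexivity and antisymmetry of $\le$, together with the fact from Definition \ref{order} that $\vec{s}$ is closed under the triple product.
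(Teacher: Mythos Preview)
Your proof is correct and follows essentially the same route as the paper: part (1) via double inclusion using stability of $\le$, and part (2) via transitivity, reflexivity and antisymmetry. The one place you go beyond the paper is your final paragraph verifying that $\vec s$ is a proper subset of $S$ precisely when $s$ is not a zero; the paper's proof of the lemma stops at injectivity and relegates this point to the remark after Definition~\ref{filt2}, where only the easy direction (zero implies $\vec s=S$) is noted, so your argument that a $\le$-minimum element must be absorbing is a welcome addition rather than a deviation.
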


\begin{proof} (1). Clearly, $HK\subseteq\overrightarrow{H}K$ and so 
$\overrightarrow{HK}\subseteq\overrightarrow{\overrightarrow{H}K}$. Also, if $hk\in\overrightarrow{H}K$ for some $h\in\overrightarrow{H}$ and $k\in K$ then $h_1\le h$ for some $h_1\in H$ and so $h_1k\le hk$. Thus $hk\in\overrightarrow{HK}$ and
$\overrightarrow{\overrightarrow{H}K}\subseteq\overrightarrow{HK}$. That proves 
$\overrightarrow{HK}= \overrightarrow{\overrightarrow{H}K}$. The equality 
$\overrightarrow{HK}= \overrightarrow{H\overrightarrow{K}}$ is proved similarly.

(2) If $s\le t$ and $u\in \vec t$ then $t\le u$ and so $s\le u$. It follows that $u\in\vec s$, and hence 
$\vec s\supseteq\vec t$. Conversely, if $\vec s\supseteq\vec t$ then $t\in\vec t$ and so $t\in\vec s$ which means that $s\le t$. 
 If $\vec s= \vec t$ for $s,t\in S$ then $s\le t$ and  $t\le s$, so that $s=t$. 
\end{proof}

\begin{lemma}\label{filt3}  The relation $\sim$ is an equivalence relation. The following properties $(1)$ and $(2)$ are equivalent for any filters $F_1$ and $F_2$ of an inverse semigroup $S$:

\begin{enumerate} 

\item  $uF_1v\n\subseteq F_2$ and $u\n F_2v\subseteq F_1$  for some $u,v\in S$, that is, $F_1$ and $F_2$ are of the same magnitude with respect to $S$; 

\item $\overrightarrow{uF_1v\n}= F_2$ and $\overrightarrow{u\n F_2v}= F_1$.

In particular,  the principal filters $\vec s$ and $\vec t$ are of the same magnitude if and only if $s$ and $t$ are $\cD$-equivalent elements of $S$. 

\item Filters $H$ that contain idempotents are precisely closed (that is, $\vec H= H$) inverse subsemigroups of $S$. In particular, $\vec s$ is a proper inverse subsemigroup of $S$ exactly when $s$ is a non-zero idempotent of $S$. Closed inverse subsemigroups $H$ and $K$ of $S$ are of the same magnitude in $S$ if and only if they are conjugate. This is so precisely when  $\overrightarrow{uHu\n}= K$ and 
$\overrightarrow{u\n Ku}= H$ for some $u\in S$ or, equivalently, $\overrightarrow{uHu\n}= K$ for some $u$ such that $u\n u\in H$. 
\end{enumerate}
\end{lemma}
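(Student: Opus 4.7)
The plan is to prove the lemma in four stages: (A) the equivalence-relation claim, (B) the equivalence of conditions (1) and (2), (C) the principal-filter / $\cD$-equivalence statement, and (D) part (3). The main obstacle is the final equivalence in (D), where the two conjugators $(u,v)$ get collapsed into a single $u$.

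For (A), reflexivity is the only delicate point. Given any $s$ in a filter $F$, I take $u = ss\n$ and $v = s\n s$; then for $f\in F$, applying triple-product closure twice---first $[f,s,s] = fs\n s \in F$, then $[s,s,fs\n s] = ss\n fs\n s \in F$---gives $u F v\n \subseteq F$, and likewise $u\n F v \subseteq F$ (the idempotents $u$ and $v$ are self-inverse). Symmetry comes from replacing $(u,v)$ by $(u\n,v\n)$; transitivity from $(u_3,v_3) = (u_2 u_1, v_2 v_1)$. For (B), $(2)\Rightarrow(1)$ is immediate. For $(1)\Rightarrow(2)$ the key point is that if $g\in F_2$ then $f := u\n g v \in F_1$ by hypothesis, and $u f v\n = uu\n g vv\n \le g$ (flanking by idempotents only shrinks an element), so $g \in \overrightarrow{u F_1 v\n}$.

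For (C), (B) reduces $\vec s \sim \vec t$ to $\overrightarrow{u \vec s v\n} = \vec t$ and $\overrightarrow{u\n \vec t v} = \vec s$. Since $s$ is the minimum of $\vec s$, $u s v\n$ is the minimum of $u \vec s v\n$, so $\vec t = \overrightarrow{\{u s v\n\}}$, forcing $t = u s v\n$ by Lemma~\ref{filt1}(2); dually $s = u\n t v$. Substituting yields $s = u\n u \cdot s \cdot v\n v$, from which $u\n u \ge ss\n$ and $v\n v \ge s\n s$ drop out. The element $w := s v\n$ then satisfies $ww\n = ss\n$ and $w\n w = t\n t$, so $s\,\cD\,t$. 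Conversely, given a witness $w$ for $s\,\cD\,t$, the choices $u := t w\n$ and $v := w\n s$ give $u s v\n = t w\n s s\n w = t w\n w w\n w = t$ and $u\n t v = w t\n t w\n s = w w\n w w\n s = s$.

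For (D), the easier assertions come first. A filter $H$ containing an idempotent $e$ is a closed inverse subsemigroup because for any $s, t \in H$ the triple products $[e,s,e] = e s\n e$ and $[s,e,t] = s e t$ lie in $H$ and are below $s\n$ and $s t$ respectively (flanking by idempotents), so upward closure yields $s\n, st \in H$; the converse is trivial. The principal-filter statement then follows: if $\vec s$ is a subsemigroup then $ss\n \in \vec s$ forces $s \le ss\n$, hence $s = ss\n$. The main work is the single-conjugator claim. Start from $H \sim K$ with conjugators $(u, v)$ from (B) and fix any $f \in E(K)$: then $x := u\n f v \in H$ and $x x\n$ simplifies to $u\n v v\n f u \in E(H)$, which by a direct check is $\le u\n u$, so upward closure gives $u\n u \in H$ (dually $u u\n \in K$). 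From $u\n u \in H$ I get $u v\n = u(u\n u)v\n \in u H v\n \subseteq K$, and by inversion $v u\n \in K$; analogously $v\n u, u\n v \in H$. Thus $u \phi u\n = (u \phi v\n)(v u\n) \in K$ for every $\phi \in H$, so $u H u\n \subseteq K$. For the reverse inclusion, any $\psi \in K = \overrightarrow{u H v\n}$ is $\ge u \phi v\n$ for some $\phi \in H$, whence $u\n \psi u \ge (u\n u)\phi(v\n u) \in H$, giving $u\n K u \subseteq H$ by upward closure. Invoking (B) with $v$ replaced by $u$ then upgrades both inclusions to the stated equalities. Finally, the equivalent formulation ``$\overrightarrow{u H u\n} = K$ for some $u$ with $u\n u \in H$'' collapses to the same thing: from $u\n u \in H$ and $\overrightarrow{u H u\n} = K$, any $\psi \in K$ is $\ge u\phi u\n$ for some $\phi \in H$, so $u\n \psi u \ge (u\n u)\phi(u\n u) \in H$, and $\overrightarrow{u\n K u} = H$ drops out by the same monotonicity argument.
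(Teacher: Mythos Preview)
Your proof is correct and follows essentially the same route as the paper. The paper establishes reflexivity of $\sim$ by the same triple-product trick $(ff\n)h(f\n f)=[[ffh]ff]$, proves $(1)\Rightarrow(2)$ by sandwiching $F_1$ between $u\n uF_1v\n v$ and $u\n F_2v$, and handles the single-conjugator reduction in (3) by first extracting $u\n u\in H$, $uu\n\in K$ and $u\n v,\,v\n u\in H$ exactly as you do; the only cosmetic difference is that where you verify $u\n Ku\subseteq H$ element by element, the paper compresses this into the chain $\overrightarrow{u\n Ku}=\overrightarrow{u\n Kvv\n u}=\overrightarrow{\overrightarrow{u\n Kv}\,v\n u}=\overrightarrow{Hv\n u}=H$ using Lemma~\ref{filt1}(1).
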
 

\begin{proof}  Indeed,  If $f,h\in F$ for a filter $F$ then $(ff\n)h(f\n f)= [[ffh]ff]\in F$ because $F$ is closed under the triple product. Thus $F\sim F$. 
If $F_1\sim F_2$ for any filters $F_1$ and $F_2$, then $F_2\sim F_1$ follows immediately from the definition of $\sim$. If $F_1\sim F_2$ and $F_2\sim F_3$ then, for some $u,v,w,x\in S$, 
\begin{align*} uF_1v\n&\subseteq F_2, &u\n F_2v&\subseteq F_1, 
&wF_2x\n&\subseteq F_3, &w\n F_3v&\subseteq F_2.
\end{align*}
Then $wuF_1v\n x\n\subseteq wF_2x\n\subseteq F_3$, similarly $u\n w\n F_3xv \subseteq F_1$, and so $F_1\sim F_3$.

Obviously (2) implies (1). If (1) holds then $F_1\subseteq u\n u F_1 v\n v \subseteq u\n F_2v\subseteq F_1$ and so $\overrightarrow{u\n F_2v}= F_1$. Similarly, $\overrightarrow{uF_1v\n}= F_2$. So (2) holds. 

Now let $\vec s\sim \vec t$. Then $\overrightarrow{u\vec s v\n}= \vec t$ and 
$\overrightarrow{u\n\vec t v} =\vec s$ for some $u,v\in S^1$.  It follows from Part (1) of Lemma \ref{filt1} that 
$\overrightarrow{usv\n}= \overrightarrow{u\vec s v\n}$. Therefore, $\overrightarrow{usv\n}= \vec t$ and so $usv\n=  t$. Similarly,  $u\n tv= s$. It follows from $s= u\n tv= u\n(usv\n)v = (u\n u)s(v\n v)$ that $s =(u\n u)s$, and so $(s,us)\in\cL$. Similarly, $t= t(v\n v)$ and so $(t,tv\n)\in\cR$. However $us= usvv\n= tv\n$ and thus 
$(s,t)\in\cL\circ\cR= \cD$. Conversely, if $(s,t)\in\cD$ then $s= t$ or $s\n s= uu\n$ and $u\n u= tt\n$ for some $u\in S$. If $s= t$ then $s\n st = stt\n$, and hence  $\vec s$ and $\vec t$ are of the same magnitude. If 
$s\n s= uu\n$ and $u\n u= tt\n$ for some $u$, then $s\n su = suu\n$, and hence $\vec s$ and $\vec u$ are of the same magnitude. Similarly,  $utt\n = u\n ut\n$, and hence $\vec u$ and $\vec t$ are of the same magnitude. Thus $\vec s$ and $\vec t$ are of the same magnitude.

(3) Indeed, let $H$ be a closed proper inverse subsemigroup of $S$. Is $0\in S$, then $0\notin H$. If 
$s,t,u\in H$ then $t\n\in H$, and hence $[stu]= st\n u\in H$. Thus $H$ is a filter. Conversely, if $H$ is a filter that contains an idempotent $e\in H$ then, for all $s,t\in H$ we have $set=se\n t= [set]\in H$ and also 
$set\le st$. So $st\in H$. Also, for every $s\in H,\, es\n e=[ese]\in H$ and $es\n e\le s\n$, so that $s\n\in H$. So $F$ is a closed inverse subsemigroup of $S$. 

Suppose that $H$ and $K$ are closed inverse subsemigroups of $S$. By Part (2) of this Lemma, $H\sim K$ means that  $\overrightarrow{uHv\n}= K$ and $\overrightarrow{u\n Kv}=H$ for some $u$ and $v$. Without loss of generality we may assume that $u=v$. Indeed, if $e$ is idempotent in $K$ then $u\n ev\in H$. Thus 
$(u\n ev)(u\n ev)\n = u\n evv\n eu \le u\n u\in H$.  Likewise, $uu\n\in K$. So $H$ and $K$ contain 
$\cD$-related idempotents (for example, the idempotents $u\n u$ and $uu\n$). 

Also, $u\n ev\le u\n v$ and so $u\n v\in H$. Similarly, $uv\n\in K$. Thus 
$$\overrightarrow{u\n Ku}= \overrightarrow{u\n Kvv\n u}= \overrightarrow{\overrightarrow{u\n Kv}v\n u}= \overrightarrow{Hv\n u}= H.$$ 
Analogously, $\overrightarrow{uHu\n}= K$. Likewise, $H\sim K$ if an only if $uHu\n\subseteq K$ and 
$u\n Ku\subseteq H$ for some $u\in S$. Finally, if $\overrightarrow{uHu\n}= K$ for some $u$ such that 
$u\n u\in H$, then $u\n Ku \subseteq\overrightarrow{u\n uHu\n u}\subseteq H$ and, by Part (1) of this Lemma, $H\sim K$. 
\end{proof}

The filters of an abstract inverse semigroup $S$ are subsets of $S$ and thus they are (partially) ordered by the set-theoretical inclusion $\subseteq$. Yet, rather than $\subseteq$, we will use the opposite order $\supseteq$ and now we explain motivation for that. 

As we have seen in Lemma \ref{filt1}, $s\le t\Leftrightarrow \vec s\supseteq\vec t$ for all $s$ and $t$ in $S$.  We can imagine $S$ as its order diagram: the elements of $S$ are vertices, the greater (with respect to $\le$) elements of $S$ are situated above the smaller elements. Then $s\in S$ is a vertex and $\vec s$ is the ``shadow" cast by $s$ when the source of light is below $S$ and the rays of light go upwards.  Thus 
$\vec s$ is a sort of a ``cone", a subset of $S$ consisting of the elements $\ge s$. The smaller (that is, the lower) the element $s$ is, the bigger shadow it casts on $S$. 

Now suppose that $S$ is an inverse subsemigroup of a bigger inverse semigroup $T$. If $F\subseteq T$ is a filter  in $T$, it {\em induces} a filter $F\cap S$ in $S$ (it is easy to check that, indeed, $F\cap S$ is a filter of $S$ provided that it is a nonempty proper subset of $S$). If $a\in T$ then the principal filter $\vec a$ of $T$  induces a filter $F_a=\vec a\cap S$ on $S$ (again, provided that $\vec a\cap S\ne\varnothing$ and 
$S\not\subseteq \vec a$).  Then $F_a$ is a part of the shadow of $a$, namely, $F_a$ is the shadow that $a$ casts on $S$ only. It turns out that  for each filter $F$ in $S$ there is a $T$ such that $F$ is induced by a {\em principal} filter of  $T$. Moreover, $T$ can be chosen so that {\em all} filters of $S$ are induced by  principal filters of the same $T$ (see \cite{Sch8} and \cite{Sch9}). Actually, we may organize the set of all filters of $S$ into an inverse semigroup with $S$ being isomorphically embedded into it by $s\mapsto \vec s$, the filters would be $\cD$-equivalent in that inverse semigroup when they are of the same magnitude etc. Again, see \cite{Sch8} and \cite{Sch9}. 

Every filter of $S$ is a shadow cast onto $S$ by an element of a bigger inverse semigroup $T$.  If $a,b\in T$ and $a\le b$ in $T$ then, obviously, $F_b\subseteq F_a$. Thus the smaller (the lower) is an element of $T$, the bigger shadow it casts onto $S$.  

Recall that $\le$ is the canonical order relation of $S$. If a subset $T\subseteq S$ has the least upper bound $u$ with respect to $\le$, we write $u= {\bigvee} T$.  If we replace the elements by their principal filters $\vec s$, then $\vec u= \bigcap_{s\in T}\vec s$, or, equivalently, 
$\overrightarrow{\bigvee T}= \bigcap_{s\in T}\vec s$, that is, the l.u.b. $\bigvee$ is replaced by the set-theoretical {\em intersection} $\bigcap$ rather than by the union $\bigcup$. This occurs because $\le$ on the elements of $S$ is replaced by the {\em inverse} set-theoretical inclusion $\supseteq$. 
\begin{definition}\label{dist} 
(1) A subset $B$ of $S$ is called a {\em basis} of $S$  if each element of $S$ is the least upper bound of an appropriate subset of $B$.

(2) A basis $B$ of $S$ is called {\em uniform} if all elements of $B$ are of the same magnitude (that is, any principal filters $\vec b_1$ and $\vec b_2$ for $b_1,b_2\in B$ are of the same magnitude).
\end{definition}

As proved in \cite{Sch6}, if  an inverse semigroup has a uniform basis, then it is noble, that is, isomorphic to a transitive subsemigroup of $\cI(A)$ for some set $A$. Yet an infinite transitive subsemigroup of $\cI(A)$ may have no uniform basis.
\smallskip

Now suppose that $\Phi$ is a transitive inverse subsemigroup of $\cI(A)$ and $a,b\in A$. Since $\Phi$ is transitive, the set $H_a^b=\{\gf\in\Phi: a\gf=b\}$ is not empty. Obviously, $H_a^b$ is closed because if 
$\gf\in H_a^b$ and $\gf\le\psi$, then $(a,b)\in\gf\subseteq\psi$, and hence $\psi\in H_a^b$. Also, if 
$\gf,\psi,\chi\in H^b_a$ then $a\gf\psi\n\chi = b\psi\n\chi=a\chi=b$ and $[\gf\psi\chi]= \gf\psi\n\chi \in H_a^b$, that is, $H_a^b$ is closed under the ternary multiplication. Thus $H_a^b$ is a filter in $\Phi$. If $a_1,a_2,b_1,b_2\in A$ then $a_1s=a_2$ and $b_1t=b_2$ for some $s,t\in\Phi$ so that 
$sH_{a_2}^{b_2}t\n\subseteq H_{a_1}^{b_1}$ and $s\n H_{a_1}^{b_1}t\subseteq H_{a_2}^{b_2}$, that is, $H_{a_1}^{b_1}\sim H_{a_2}^{b_2}$. Therefore all filters $H_a^b$ are of the same magnitude for all $a,b\in A$.
Each element $\gf\in\Phi$ is completely determined by the set of filters $\cF_{\gf}= \{H_a^b:\, \gf\in H_a^b\}$ that contain $\gf$ because $\vec\gf=\bigcap\cF_\gf$. This explains the following definition.

\begin{definition}\label{filt6} An infinitesimal basis of an inverse semigroup $S$ is an indexed  set 
$\{F_i: i\in I\}$ of filters of $S$ such that all filters $F_i$ are of the same magnitude and they form a basis of $S$ in the sense that, for each $s\in S$, $\vec s= \bigvee\{F_i: \vec s\subseteq F_i\}$ or, equivalently, 
$\vec s= \bigcap\{F_i: s\in F_i\}$.
\end{definition}

Uniform bases from Definition \ref{dist} form a special case of those from Definition \ref{filt6}. Uniform bases consist of ``small" elements of $S$ because each element of $S$ is the l.u.b. of some elements from the basis.  Now, instead of small elements or their principal filters, we can use even ``smaller" filters. That explains why these new bases are called infinitesimal. If a set of filters forms a basis of $S$, these ``infinitesimal filters" don't have to have the same magnitude. However, in our definition, we require all of them to be of the same magnitude, have the same degree of ``infinitesimality". 

Also observe that if $S$ has a zero $0$, then $0$ is the l.u.b.~of an empty set of filters.
\smallskip

It turns out that an infinitesimal basis of $S$ is uniquely determined by each of its filters. Indeed, if $F$ is a filter from an infinitesimal basis $B$ and $K$ is another filter of $B$, then there exist $u,v\in S$ such that $\overrightarrow{uFv\n}= K$ and $\overrightarrow{u\n Kv}=F$. Thus $K$ is completely determined by $F,\ u$ and $v$. It remains to see which of the elements $u$ and $v$ will produce another filter from $B$. 
 
 As we saw in the second and third paragraphs of the proof of Lemma \ref{filt3}.(3), $u\n uFv\n v\subseteq F$ for our $u$ and $v$. For every $s\in F,\, u\n usv\n v\le u\n us\le s$, and hence 
 $u\n uFv\n v\subseteq \overrightarrow{u\n uF}\subseteq\vec F= F$. Thus $u\n uF\subseteq F$. Analogously, $Fv\n v\subseteq F$. Conversely, if $u\n uF\subseteq F$ and $Fv\n v\subseteq F$, then $u\n uFv\n v\subseteq\overrightarrow{u\n uFv\n v} =F$ and the filter $\overrightarrow{uFv\n}$ is of the same magnitude as $F$. 
 
Observe that the filters $\overrightarrow{uF}$ and $\overrightarrow{Fv\n}$ are of the same magnitude as 
$F$.  If $s\in F$ then $Fs\n s\subseteq F$ becase $F$ is closed under the ternary multiplication and, for every $t\in F$, we have $ts\n s= [tss]\in F$. It follows that $\overrightarrow{Fs\n}\sim F$. 

The filter $\overrightarrow{Fs\n}$ contains idempotents. For example, $ss\n\in \overrightarrow{Fs\n}$.

If $S$ is a group then closed inverse subsemigroups of $S$ are precisely the subgroups of $S$ and 
$H\sim K$ means that $uHu\n= K$. That gives a new (or rather an old) meaning to the term ``conjugate" we use. Observe that here we don't need the symmetric part $u\n Ku=H$ while, in the case of arbitrary inverse semigroups $S$ this symmetric part is needed. It can be replaced by a simpler property: $H\sim K$ if and only if $uHu\n= K$ and $u\n u\in H$. 

\begin{definition}\label{filt4} For a closed inverse subsemigroup  $H$ of $S$,  let $\cE_H$ be the set of all closed inverse subsemigroups conjugate to $H$ in $S$. We call $\mathcal E_H$ an {\em idempotent infinitesimal basis in} $S$ when every idempotent $e\in S$ is the l.u.b. of an appropriate subset of $\cE_H$. In such a case $H$ is called an {\em infinitesimal inverse subsemigroup}. 
\end{definition}

Observe that we require $\cE$ to be a basis not in  $E(S)$ but {\em in} $S$, that is, for every $e\in E(S)$, $\vec e= \bigcap\{F\vert e\in F\in\cE\}$. Here $\vec e$ is considered in $S$, that is, it may contain non-idempotent elements of $S$.
 
\section{Main Result}
 
\noindent{\bf Main Theorem}.  \label{main}{\em For every inverse semigroup $S$ the following properties are equivalent:
 
\begin{enumerate}
 
 \item $S$ is noble, that is, isomorphic to a transitive inverse semigroup of one-to-one partial transformations of an appropriate set;
 
  \item $S$ possesses an infinitesimal basis;

   \item $S$ possesses an infinitesimal closed inverse subsemigroup.

  \end{enumerate}}
 
 \begin{proof} $(1)\Rightarrow(2)$.  Suppose that $f$ is an isomorphism of $S$ onto a transitive subsemigroup $f(S)$ of the symmetric inverse semigroup $\cI(A)$ on an appropriate set $A$. For all 
 $(a,b)\in A\times A$ define a subset $H^a_b$ of $S$ as follows: $s\in H^a_b \Leftrightarrow af(s)=b$, that is, the partial one-to-one transformation $f(s)$ moves $a$ into $b$. In the motivation for Definition \ref{filt6} (the paragraph immediately before this definition) we proved that $\{H^a_b|\; (a,b)\in A\times A\}$ is an infinitesimal basis for $S$. 
 
$(2)\Rightarrow(3)$.  By Lemma \ref{filt3}.(3) the filters that contain idempotents of $S$ are precisely closed inverser subsemigroups of $S$. 
  
  Suppose that $S$ has an infinitesimal basis $\cF=\{F_i\}_{i\in I}$.  If $e\in S$ is idempotent then $e$ is the l.u.b.~of those $F_i$ that contain $e$.  If there is no such $F_i$ than $e$ is the l.u.b.~of an empty subset of $\cF$, that is, $e$ is the least element of $S$. Thus $e=0$. It follows that if $e\ne 0$ then $e\in F_i$ for some $i\in I$. This $F_i$ contains an idempotent and so it is a closed inverse subsemigroup of $S$. 

All $F_i$'s with idempotents are conjugate because they belong to the same infinitesimal basis of $S$. 
It remains to prove that $H$ is infinitesimal. Recall that $\cE_H$ denotes the set of all closed inverse subsemigroups conjugate to $H$ in $S$. Thus each of these $F_i$'s is an infinitesimal closed inverse subsemigroup of $S$. 

$(3)\Rightarrow(1)$. Suppose that $H$ is an infinitesimal closed inverse subsemigroup of $S$ and $\cF$ is the set of all filters $F\in S$ such that $F\sim H$, that is,  $F= \overrightarrow{uHv\n}$ and $H=\overrightarrow{u\n Fv\n}$ for some $u,v\in S^1$.  It follows from our comments after Definition \ref{filt6} that $\cF$ consists of all $F$ such that $F= \overrightarrow{uHv\n}$ for $u,v\in S$ such that $u\n u, v\n v\in H$. 

For each $s\in S$ define the following binary relation $f(s)\subseteq \cF \times \cF$ between the elements of $\cF$.  For any $F_1, F_2\in \cF$ let $(F_1, F_2)\in f(s)$ precisely when $F_1s\subseteq F_2$ and $F_2s\n\subseteq F_1$. By Lemma \ref{filt3}.(3) this means that $\overrightarrow{F_1s}= F_2$ and $\overrightarrow{F_2s\n} =F_1$. Clearly, $(F_1, F_2)\in f(s)$ if and only if $(F_2, F_1)\in f(s\n)$, that is, $f(s\n)= f(s)\n$. It follows that $f(s)$ is a partial one to one transformation of $\cF$. If $(F_1, F_2)\in f(s)$ and 
$(F_2, F_3)\in f(t)$, then $$F_1f(st)= \overrightarrow{F_1st}= \overrightarrow{\overrightarrow{F_1 s}t}=
(F_1f(s))f(t)= F_2f(t)= F_{3}$$ and similarly  
$F_3(st)\n= (F_3f(t)\n)f(s)\n \subseteq F_2s\n= F_1$, that is,\\$f(st)\subseteq f(s)\circ f(t)$. Also, if 
$(F_1,F_3)\in f(s)\circ f(t)$ for some $F_1, F_3\in\cF$ then $(F_1, F_2)\in f(s)$ and $(F_2, F_3)\in f(t)$ for some $F_2\in \cF$. Thus $f(s)\circ f(t)\subseteq f(st)$. It follows that $f(st)=f(s)\circ f(t)$ for all $s,t\in S$ and the mapping $s\mapsto f(s)$ is a homomorphism of $S$ into $\cI(\cF)$.

To prove that this representation is faithful,  suppose that $f(s)= f(t)$ for some $s,t\in S$. Then 
$f(ss\n)= f(s)\circ f(s\n)= f(s)\circ f(s)\n= f(s)\circ f(t)\n= f(st\n)$. Here $ss\n$ is an idempotent of $S$, and hence $f(ss\n)$ is an idempotent of $\cI(\cF)$. So $(F,F)\in f(ss\n)$ for $F\in\cF$ precisely when $Fss\n\subseteq F$ or, equivalently, $ss\n\in F$. But $ss\n\in F$ means that $F$ is a closed inverse subsemigroup of $S$ because $F$ is a filter that contains an idempotent. Recall that $F\in\cF$ precisely when $F\sim H$, which means that $F$ is a conjugate of $H$. Thus $f(ss\n)$ is an identity mapping of the set of all closed inverse subsemigroups of $S$ that are conjugates of $H$ and contain the idempotent $ss\n$. But $H$ is an infinitesimal closed inverse subsemigroup of $S$, which by assumption (3) means that $ss\n$ is the l.u.b. of a certain subset of infinitesimal closed inverse subsemigroups conjugate to $H$. Since $f(ss\n)= f(st\n)$, then $st\n$ is an element of $S$ that, too, is a l.u.b. of the same set of closed inverse subsemigroups of $S$. Thus $ss\n= st\n$. It follows that $s=ss\n s= st\n s= [sts]$, and so $s\le t$. 

Interchanging $s$ with $t$ in the equality $f(s)= f(t)$ we obtain $t\le s$. Thus, if $f(s)=f(t)$ then $s=t$, that is, the isomorphism $f:S \to \cI(\cF)$ is faithful. 

By our definition of $\cF$, if $uu\n\in H$ for some $u\in S$ , then $F=\overrightarrow{Hu}\in\cF$. Thus 
$Hf(u)=F$, that is, $f(u)$ transforms $H$ into $F$. But $f(s)\n= f(s\n)$ and so $f(u\n)$ sends $H$ to 
$\overrightarrow{Hu\n}\in\cF$. Also, 
$(\overrightarrow{uH})\n =  \overrightarrow{H\n u\n}=  \overrightarrow{Hu\n}\in\cF$ because $H\n= H$. Thus  $f(u)=f(u\n)\n$ transforms $\overrightarrow{uH}$ into $H$. 

Now let $F$ be an arbitrary filter from $\cF$. By Lemma \ref{filt3}.(2)  $F=\overrightarrow{uHv\n}$ for certain $u,v\in S$ such that $u\n u, v\n v\in H$. As we have just seen, then  $f(u)$ moves $H$ to $\overrightarrow{uH}$ and so $f(uv\n)= f(u)\circ f(v\n)$ moves 
$\overrightarrow{uH}$ to $\overrightarrow{\overrightarrow{uH}v\n}= \overrightarrow{uHv\n}= F$.

Now suppose that $F_1,F_2\in\cF$. Then there exist $s_1,s_2\in S$ such that  $Hf(s_i)=F_i$ for $i=1,2$ 
and so $F_1f(s_1\n s_2)=F_2$. Therefore, $s\mapsto f(s)$ is a transitive representation of $S$ in $\cI(\cF)$.
\end{proof} 

\end{document}